\documentclass[10pt]{amsart}

\pagestyle{headings}

\usepackage{amsmath, mathtools, amsthm, amscd, amssymb, amsfonts, pstricks,bbm, verbatim, mathrsfs, tikz, subfig}
\usepackage[super]{nth}
\usepackage {tikz}
\usetikzlibrary {positioning}

\newcommand{\R}{{\mathbb R}}

\newtheorem{theorem}{Theorem}[section]

\newtheorem{corollary}[theorem]{Corollary}

\newtheorem{definition}[theorem]{Definition}

\numberwithin{equation}{section}

\theoremstyle{definition}

\newtheorem{remark}[theorem]{Remark}

\begin{document}
	
	\title[Spectral Properties Of The Gramian Of Finite Ultrametric Spaces]{Spectral Properties Of The Gramian Of Finite Ultrametric Spaces}
	\author[Gavin Robertson]{Gavin Robertson}
	\email{gr9625@gmail.com}
	
	\begin{abstract}
		The concept of $p$-negative type is such that a metric space $(X,d_{X})$ has $p$-negative type if and only if $(X,d_{X}^{p/2})$ embeds isometrically into a Hilbert space. If $X=\{x_{0},x_{1},\dots,x_{n}\}$ then the $p$-negative type of $X$ is intimately related to the Gramian matrix $G_{p}=(g_{ij})_{i,j=1}^{n}$ where $g_{ij}=\frac{1}{2}(d_{X}(x_{i},x_{0})^{p}+d_{X}(x_{j},x_{0})^{p}-d_{X}(x_{i},x_{j})^{p})$. In particular, $X$ has strict $p$-negative type if and only if $G_{p}$ is strictly positive semidefinite. As such, a natural measure of the degree of strictness of $p$-negative type that $X$ possesses is the minimum eigenvalue of the Gramian $\lambda_{min}(G_{p})$. In this article we compute the minimum eigenvalue of the Gramian of a finite ultrametric space. Namely, if $X$ is a finite ultrametric space with minimum nonzero distance $\alpha_{1}$ then we show that $\lambda_{min}(G_{p})=\alpha_{1}^{p}/2$. We also provide a description of the corresponding eigenspace.
	\end{abstract}
	\maketitle
	
	\section{Introduction}\label{Sec 1}
	
	The theory of $p$-negative type was originally introduced by Schoenberg \cite{Schoenberg 1, Schoenberg 2} to study isometric embeddings of finite metric spaces into Euclidean space. Its definition is as follows. The definition of strict $p$-negative type came later and is due to \cite{Li 1}.
	
	\begin{definition}
		Let $(X,d_{X})$ be a metric space and $p\geq0$. Then $X$ is said to have $p$-negative type if\footnote{For $p=0$ we use the convention that $0^{0}=0$.}
		\[ \sum_{i,j=1}^{n}d_{X}(x_{i},x_{j})^{p}\xi_{i}\xi_{j}\leq0
		\]
		for all distinct $x_{1},\dots,x_{n}\in X$, all $\xi_{1},\dots,\xi_{n}\in\R$ with $\sum_{i=1}^{n}\xi_{i}=0$ and $n\geq 2$. Further, $X$ is said to have strict $p$-negative type if equality holds only for $\xi_{1}=\dots=\xi_{n}=0$.
	\end{definition}
	
	Among other things, Schoenberg \cite{Schoenberg 1, Schoenberg 2} was able to prove the following results pertaining to $p$-negative type, which explains its connection to isometric embeddings into Hilbert space. (Again, the results for strict $p$-negative type are due to Li and Weston in \cite{Li 1}.)
	
	\begin{theorem}
		Let $(X,d_{X})$ be a metric space and $p\geq 0$.
		\begin{enumerate}
			\item $X$ has $p$-negative type if and only if $(X,d_{X}^{p/2})$ embeds isometrically into a Hilbert space.
			\item $X$ has strict $p$-negative type if and only if $(X,d_{X}^{p/2})$ embeds isometrically into a Hilbert space as an affinely independent set.
			\item If $0\leq q<p$ and $X$ has $p$-negative type, then $X$ also has strict $q$-negative type.
		\end{enumerate}
	\end{theorem}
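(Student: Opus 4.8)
The plan is to pass everything through the Gramian, which is the central object of the paper. Fix distinct points $x_{0},x_{1},\dots,x_{n}\in X$, use $x_{0}$ as a base point, and form $G_{p}=(g_{ij})_{i,j=1}^{n}$ with $g_{ij}=\tfrac12\bigl(d_{X}(x_{i},x_{0})^{p}+d_{X}(x_{j},x_{0})^{p}-d_{X}(x_{i},x_{j})^{p}\bigr)$. The starting point is the polarisation identity
\[
\sum_{i,j=1}^{n}g_{ij}\xi_{i}\xi_{j}=-\tfrac12\sum_{i,j=0}^{n}d_{X}(x_{i},x_{j})^{p}\xi_{i}\xi_{j}\qquad\text{whenever }\sum_{i=0}^{n}\xi_{i}=0,
\]
obtained by substituting $\xi_{0}=-\sum_{i=1}^{n}\xi_{i}$ and expanding. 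Since every $(\xi_{1},\dots,\xi_{n})\in\R^{n}$ extends uniquely to such a zero-sum tuple, this shows that the $p$-negative type inequality over $\{x_{0},\dots,x_{n}\}$ holds for all admissible weights if and only if $G_{p}$ is positive semidefinite, and that equality forces $\xi\equiv0$ if and only if $G_{p}$ is positive definite.

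For (1) and (2) I would invoke the standard Gram factorisation of a symmetric positive semidefinite matrix: $G_{p}\succeq0$ precisely when there exist vectors $v_{1},\dots,v_{n}$ in a Euclidean space with $g_{ij}=\langle v_{i},v_{j}\rangle$, and $G_{p}\succ0$ precisely when these $v_{i}$ are moreover linearly independent. Writing $v_{0}:=0$ and computing $\|v_{i}-v_{j}\|^{2}=g_{ii}+g_{jj}-2g_{ij}=d_{X}(x_{i},x_{j})^{p}$ and $\|v_{i}-v_{0}\|^{2}=g_{ii}=d_{X}(x_{i},x_{0})^{p}$ presents $x_{i}\mapsto v_{i}$ as an isometric embedding of $(\{x_{0},\dots,x_{n}\},d_{X}^{p/2})$ into Hilbert space, which is affinely independent exactly when $v_{1},\dots,v_{n}$ are linearly independent; conversely any isometric embedding of a finite piece returns a genuine Gram matrix $G_{p}$. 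To pass from finite subsets to all of $X$ one uses the classical fact that a metric space embeds isometrically into a Hilbert space if and only if each of its finite subsets does (this is Schoenberg's criterion applied to $d_{X}^{p/2}$, provable by a direct-limit construction), and since affine independence of a family is a finite condition the same construction also yields (2).

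For (3), assume the non-trivial case $0<q<p$ and begin from the integral representation
\[
t^{q}=\frac{q/p}{\Gamma(1-q/p)}\int_{0}^{\infty}\frac{1-e^{-ut^{p}}}{u^{1+q/p}}\,du\qquad(t\ge0),
\]
whose integral converges at $0$ since $1-e^{-ut^{p}}=O(u)$ and at $\infty$ since $1-e^{-ut^{p}}\le1$, and whose constant is evaluated by an integration by parts against the Gamma function. Applying this to each $d_{X}(x_{i},x_{j})^{q}$, summing the finite family against weights with $\sum_{i}\xi_{i}=0$, and using $\sum_{i,j}\xi_{i}\xi_{j}=0$ to replace $1-e^{-u\,d_{X}(x_{i},x_{j})^{p}}$ by $-e^{-u\,d_{X}(x_{i},x_{j})^{p}}$ yields
\[
\sum_{i,j}d_{X}(x_{i},x_{j})^{q}\xi_{i}\xi_{j}=-\frac{q/p}{\Gamma(1-q/p)}\int_{0}^{\infty}\frac{1}{u^{1+q/p}}\Bigl(\sum_{i,j}e^{-u\,d_{X}(x_{i},x_{j})^{p}}\xi_{i}\xi_{j}\Bigr)du.
\]
By part (1), $p$-negative type of $X$ supplies vectors with $d_{X}(x_{i},x_{j})^{p}=\|v_{i}-v_{j}\|^{2}$, so the inner sum equals $\sum_{i,j}e^{-u\|v_{i}-v_{j}\|^{2}}\xi_{i}\xi_{j}$, which is $\ge0$ because the Gaussian kernel on a Hilbert space is positive semidefinite; hence the right-hand side is $\le0$ and $X$ has $q$-negative type. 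If it vanishes, the nonnegative continuous function $u\mapsto f(u):=\sum_{i,j}e^{-u\|v_{i}-v_{j}\|^{2}}\xi_{i}\xi_{j}$ integrates to $0$ against the strictly positive weight $u^{-1-q/p}$, so $f\equiv0$ on $(0,\infty)$; letting $u\to\infty$ and using that the $x_{i}$ are distinct gives $f(u)\to\sum_{i}\xi_{i}^{2}$, whence $\xi\equiv0$ and $X$ has strict $q$-negative type. The case $q=0$ is immediate, since then $\sum_{i,j}d_{X}(x_{i},x_{j})^{0}\xi_{i}\xi_{j}=-\sum_{i}\xi_{i}^{2}$ under the zero-sum constraint.

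The real friction, I expect, lies not in the conceptual structure but in two routine-yet-delicate points: fixing the sign and the exact constant in the integral representation while checking convergence, and the passage from finite subsets to an arbitrary, possibly non-separable $X$ in (1) and (2), where one must assemble a single Hilbert space carrying a coherent family of (affinely independent) embeddings. The positive semidefiniteness of the Gaussian kernel and the spectral/Gram factorisation of positive semidefinite matrices are classical and I would simply cite them.
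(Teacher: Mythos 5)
The paper offers no proof of this theorem at all: it is stated as background and attributed to Schoenberg \cite{Schoenberg 1, Schoenberg 2} for the non-strict parts and to Li and Weston \cite{Li 1} for the strict ones, so there is no in-paper argument to measure yours against. What you have written is essentially a correct reconstruction of the classical proofs. The polarisation identity $\sum_{i,j=1}^{n}g_{ij}\xi_{i}\xi_{j}=-\frac{1}{2}\sum_{i,j=0}^{n}d_{X}(x_{i},x_{j})^{p}\xi_{i}\xi_{j}$ on zero-sum weights, the computation $\|v_{i}-v_{j}\|^{2}=g_{ii}+g_{jj}-2g_{ij}=d_{X}(x_{i},x_{j})^{p}$, and the equivalence of positive definiteness of a Gram matrix with linear independence of the $v_{i}$ (hence affine independence once $v_{0}=0$ is adjoined) all check out, and they are exactly the mechanism this paper itself relies on when it identifies (strict) $p$-negative type with (strict) positive semidefiniteness of $G_{p}$. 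For (3), the subordination formula $t^{q}=\frac{q/p}{\Gamma(1-q/p)}\int_{0}^{\infty}(1-e^{-ut^{p}})u^{-1-q/p}\,du$ has the right constant and the right sign (the prefactor is positive for $0<q<p$), and the $u\to\infty$ limit $f(u)\to\sum_{i}\xi_{i}^{2}$ is precisely the Li--Weston strictness argument; the $q=0$ case is handled correctly under the convention $0^{0}=0$.

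The one genuinely thin spot is the passage from finite subsets to an arbitrary $X$ in (1) and (2). A literal direct limit over the directed family of finite subsets does not work out of the box, because it requires a coherent choice of the finite embeddings. The standard repair is to note that $K(x,y)=\frac{1}{2}\bigl(d_{X}(x,x_{0})^{p}+d_{X}(y,x_{0})^{p}-d_{X}(x,y)^{p}\bigr)$ is a positive semidefinite kernel on $X\times X$ (positivity of a kernel is a condition on finite subsets, which is exactly what your Gramian argument establishes) and to embed $x\mapsto K(x,\cdot)$ into the associated reproducing kernel Hilbert space; this produces a single coherent embedding, works for non-separable $X$, and, since affine independence is likewise a finite condition, yields (2) as well. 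With that substitution, and granting the two classical facts you propose to cite (Gram factorisation and positive semidefiniteness of the Gaussian kernel on a Hilbert space), the argument is complete.
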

	
	Note that by taking $p=2$ in the above theorem, one obtains the following characterisation of those metric spaces that embed into a Hilbert space: a metric space $(X,d_{X})$ embeds into a Hilbert space if and only if $X$ has $2$-negative type. Also, note that the third result in the above theorem implies that the $p$-negative type values of a metric space are all determined by the largest value of $p$ for which $X$ has $p$-negative type. If $(X,d_{X})$ is a metric space then we define the supremal $p$-negative type of $X$, denoted by $\wp_{X}$, to be
	\[ \wp_{X}=\sup\{p\geq0:X\text{ has $p$-negative type}\}\in[0,\infty].
	\]
	Since all of the sums that appear in the definition of $p$-negative type are finite sums a simple limiting argument shows that if $\wp_{X}<\infty$ then $X$ has $\wp_{X}$-negative type. Consequently, the set of values $p\geq 0$ such that a metric space has $p$-negative type is always of the form $[0,\wp_{X}]$ or $[0,\infty)$.
	
	To quantify the degree of strictness of $p$-negative type that a metric space possesses, the $p$-negative type gap was introduced by Doust and Weston in \cite{Doust 1, Doust 3}. If $(X,d_{X})$ is a metric space and $p\geq 0$ then the $p$-negative type gap of $X$ is the largest non-negative constant $\Gamma=\Gamma_{X}(p)$ such that
	\[ \frac{\Gamma}{2}\bigg(\sum_{k=1}^{n}|\xi_{k}|\bigg)^{2}+\sum_{i,j=1}^{n}d_{X}(x_{i},x_{j})^{p}\xi_{i}\xi_{j}\leq0
	\]
	for all distinct $x_{1},\dots,x_{n}\in X$, $\xi_{1},\dots,\xi_{n}\in\R$ with $\sum_{i=1}^{n}\xi_{i}=0$ and $n\geq 2$.
	
	Most importantly, if $X$ is a finite metric space and $p\geq 0$ then $X$ has strict $p$-negative type if and only if $\Gamma_{X}(p)>0$ (see \cite{Li 1}). The usefulness of knowing the value of $\Gamma_{X}(p)$ when $X$ has strict $p$-negative type is seen in the following result of \cite{Li 1}. If $X$ is a finite metric space with $|X|=n\geq 3$ and $X$ has strict $p$-negative type, then $X$ also has strict $q$-negative type for all $q\in[p,p+\epsilon)$ where
	\[ \epsilon=\frac{\ln\big(1+\frac{\Gamma_{X}(p)}{D_{X}^{p}\gamma(n)}\big)}{\ln\mathcal{D}_{X}}
	\]
	with $D_{X}=\max_{x,y\in X}d_{X}(x,y)$, $\mathcal{D}_{X}=D_{X}/\min_{x,y\in X,x\neq y}d_{X}(x,y)$ and $\gamma(n)=1-\frac{1}{2}(\lfloor\frac{n}{2}\rfloor^{-1}+\lceil\frac{n}{2}\rceil^{-1})$. That is, if $\Gamma_{X}(p)>0$ is known then the values of $q$ for which $X$ has $q$-negative type can be extended to certain values of $q>p$.
	
	Doust and Weston in \cite{Doust 1} were able to provide an explicit formula for the $1$-negative type gap of an arbitrary weighted finite metric tree. Using this, they were able to provide non-trivial lower bounds on the supremal $p$-negative type of finite metric trees. Li and Weston in \cite{Li 1} were able to provide an explicit formula for the $0$-negative type gap of a finite metric space, which led them to conclude non-trivial bounds on the supremal $p$-negative type of an $n$ point finite metric space. For finite ultrametric spaces, the authors in \cite{Doust 2} were able to provide an elegant combinatorial formula for the limit of $\Gamma_{X}(p)$ as $p\rightarrow\infty$. In \cite{Wolf 1}, Wolf provided a versatile formula for computing the $p$-negative type gap of a finite metric space, and among other things, was able to provide a computation of the $1$-negative type gap for the cycle $C_{n}$, when $n$ is odd. Also, in \cite{Wolf 2}, Wolf showed that the $p$-negative type gap is closely related to certain eigenvalues of the distance matrix of $(X,d_{X}^{p})$.
	
	Recently, many $p$-negative type properties of finite metric spaces, such as the $p$-negative type gap, have been studied through the lens of the Gramian matrix, whose definition is as follows.
	
	\begin{definition}
		Let $(X,d_{X})=(\{x_{0},x_{1},\dots,x_{n}\},d_{X})$ be a metric space and $p\geq 0$. The \textit{$p$-Gramian} of $X$ is the matrix $G_{p}(X)=(g_{ij})_{i,j=1}^{n}$ where
		\[ g_{ij}=\frac{1}{2}(d_{X}(x_{i},x_{0})^{p}+d_{X}(x_{j},x_{0})^{p}-d_{X}(x_{i},x_{j})^{p})
		\]
		for all $1\leq i,j\leq n$.
	\end{definition}

	\begin{remark}
		Note that the Gramian of the metric space depends not only on the metric structure of the space but also on the labelling of the points $x_{0},x_{1},\dots,x_{n}$.
	\end{remark}
	
	It follows from Schoenberg's original arguments \cite{Schoenberg 1, Schoenberg 2} that a finite metric space has (strict) $p$-negative type if and only if it is (strictly) positive semidefinite. Thus another natural measure of the strictness of $p$-negative type of a finite metric space is the minimum eigenvalue of the Gramian matrix $\lambda_{min}(G_{p})$. In particular, it is obvious that a finite metric space has strict $p$-negative type if and only if $\lambda_{min}(G_{p})>0$. In \cite{Robertson 2} the relation between $\lambda_{min}(G_{p})$ and the usual $p$-negative type gap as stated above is given. The details of this relation are expanded on in Section \ref{Gramian Section}.
	
	In \cite{Robertson 2} the minimum eigenvalue of the Gramian $\lambda_{min}(G_{p})$ was computed for the complete bipartite graphs $K_{n,1}$. Apart from this however, there are no other spaces for which $\lambda_{min}(G_{p})$ has been computed. The purpose of this article is to compute $\lambda_{min}(G_{p})$ for the class of spaces known as ultrametric spaces, and also to describe the corresponding eigenspace. Most notably here we are able to compute $\lambda_{min}(G_{p})$ for all $p\geq 0$, whereas, as mentioned above, only the behaviour of $\Gamma_{X}(p)$ as $p\rightarrow\infty$ is known.
	
	In Section \ref{Gramian Section} the relevant background on $\lambda_{min}(G_{p})$ and its relation to the usual $p$-negative type gap is given. Then in Section \ref{Ultrametric Section} the basic definitions pertaining to ultrametric spaces are covered. In Section \ref{Minimum Eigenvalue Section} we show that if $X$ is a finite ultrametric space with minimum nonzero distance $\alpha_{1}$ then $\lambda_{min}(G_{p})=\alpha_{1}^{p}/2$. Then in Section \ref{Eigenspace Section} we provide a description of the corresponding eigenspace. Finally, in Section \ref{Example Section} we provide an explicit example of these concepts for a specific finite ultrametric space.

	\section{Eigenvalues of The Gramian and the $p$-negative Type Gap}\label{Gramian Section}

	In \cite{Robertson 2} a more general $p$-negative type gap than that of Doust and Weston in \cite{Doust 1, Doust 3} was introduced. For this we require the following definitions.

	\begin{definition}
		By a \textit{weight vector} in $\R^{2n+2}$, we mean a vector consisting of nonnegative real numbers
		\[ (s,t)=(s_{0},s_{1},\dots,s_{n},t_{0},t_{1},\dots,t_{n})\in\R^{2n+2}
		\]
		such that the following conditions hold.
		\begin{enumerate}
			\item $\sum_{i=0}^{n}s_{i}=\sum_{i=0}^{n}t_{i}$.
			\item $\sum_{i=0}^{n}s_{i}t_{i}=0$ (i.e. $s,t$ are disjointly supported).
		\end{enumerate}
		The set of all weight vectors in $\R^{n+2}$ will be denoted by $W_{n}$.
	\end{definition}

	\begin{definition}
		A nonempty subset $S\subseteq W_{n}$ will be called an \textit{allowable set} if it satisfies the following three properties.
		\begin{enumerate}
			\item $0\notin S$.
			\item For all $x\in W_{n}$ with $x\neq 0$, there exists some $\lambda>0$ such that $\lambda x\in S$.
			\item $S$ is compact (as a subset of $\R^{2n+2}$ in its usual topology).
		\end{enumerate}
	\end{definition}
	
	We may now define our more general $p$-negative type gap functions.
	
	\begin{definition}
		Let $(X,d_{X})=(\{x_{0},x_{1},\dots,x_{n}\},d_{X})$ be a metric space and let $S\subseteq W_{n}$ be an allowable set. The \textit{$p$-negative type gap function} (corresponding to $S$ and $X$) is the function $\Gamma_{S}:[0,\infty)\rightarrow\R$ defined by
		\[ \Gamma_{S}(p)=\frac{1}{2}\bigg\{\inf_{(s,t)\in S}2\sum_{i,j=0}^{n}s_{i}t_{j}d_{X}(x_{i},x_{j})^{p}-\sum_{i,j=0}^{n}(s_{i}s_{j}+t_{i}t_{j})d_{X}(x_{i},x_{j})^{p}\bigg\}
		\]
		for all $p\geq 0$.
	\end{definition}

	It was shown in \cite{Robertson 2} that for a certain choice of $S$, one actually recovers the minimum eigenvalue of the Gramian matrix.
	
	\begin{corollary}\label{Min Eigenvalue Formula}
		Let $(X,d_{X})=(\{x_{0},x_{1},\dots,x_{n}\},d_{X})$ be a metric space, $p\geq 0$ and $G=G_{p}(X)$ the $p$-Gramian of $X$. Also, set
		\begin{align*}
			S&=\bigg\{(s,t)\in W_{n}:\sum_{i=1}^{n}s_{i}^{2}+\sum_{i=1}^{n}t_{i}^{2}=1\bigg\}.
		\end{align*}
		Then
		\[ \Gamma_{S}(p)=\lambda_{min}(G).
		\]
	\end{corollary}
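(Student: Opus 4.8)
The plan is to show directly that the infimum defining $\Gamma_S(p)$ equals $\lambda_{min}(G)$ by rewriting the quadratic form inside the infimum in terms of the Gramian. First I would expand the expression
\[
Q(s,t) := 2\sum_{i,j=0}^{n}s_it_jd_X(x_i,x_j)^p-\sum_{i,j=0}^{n}(s_is_j+t_it_j)d_X(x_i,x_j)^p
\]
and use the substitution $\xi_i = s_i - t_i$. Since $(s,t)\in W_n$ we have $\sum_{i=0}^n \xi_i = 0$, and because $s$ and $t$ are disjointly supported, $\xi_i^2 = s_i^2 + t_i^2$ for every $i$ (exactly one of $s_i,t_i$ is nonzero), so in particular $\sum_{i=0}^n\xi_i^2 = \sum_{i=0}^n s_i^2 + \sum_{i=0}^n t_i^2$. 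A short computation shows $Q(s,t) = -\sum_{i,j=0}^n d_X(x_i,x_j)^p\,\xi_i\xi_j$. Thus $\tfrac12 Q(s,t)$ is the usual negative-type quadratic form evaluated at $\xi$, and it is standard (via the argument of Schoenberg, or the now-routine substitution $\xi_0 = -\sum_{i=1}^n\xi_i$) that
\[
-\tfrac12\sum_{i,j=0}^n d_X(x_i,x_j)^p\,\xi_i\xi_j \;=\; \sum_{i,j=1}^n g_{ij}\xi_i\xi_j \;=\; \xi^{\mathsf T} G\,\xi,
\]
where $\xi = (\xi_1,\dots,\xi_n)^{\mathsf T}$; this is precisely the identity that underlies the equivalence ``$p$-negative type $\iff$ $G$ positive semidefinite.''

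Next I would handle the normalisation. On $S$ we have $\sum_{i=1}^n s_i^2 + \sum_{i=1}^n t_i^2 = 1$, which by the disjoint-support identity says $\sum_{i=1}^n \xi_i^2 = 1$, i.e. $\|\xi\|_2 = 1$ where $\xi\in\R^n$ (the $i=0$ coordinate is dropped). Hence
\[
\Gamma_S(p) \;=\; \inf_{(s,t)\in S} \xi^{\mathsf T}G\,\xi \;\geq\; \inf_{\substack{\xi\in\R^n\\\|\xi\|_2=1}} \xi^{\mathsf T}G\,\xi \;=\; \lambda_{min}(G),
\]
the last equality being the Rayleigh–Ritz characterisation of the minimum eigenvalue of the symmetric matrix $G$.

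For the reverse inequality I need surjectivity of the map $(s,t)\mapsto\xi$ onto the unit sphere of $\R^n$, which is where a little care is required and which I expect to be the only real obstacle: given any unit vector $\xi = (\xi_1,\dots,\xi_n)\in\R^n$, set $\xi_0 = -\sum_{i=1}^n\xi_i$, then split the full vector $(\xi_0,\dots,\xi_n)$ into its positive and negative parts, $s_i = \max(\xi_i,0)$ and $t_i = \max(-\xi_i,0)$. One checks $(s,t)$ satisfies both weight-vector conditions ($\sum s_i = \sum t_i$ follows from $\sum_{i=0}^n\xi_i=0$; disjoint support is immediate), that $\xi_i = s_i - t_i$ for all $i$, and that $\sum_{i=1}^n s_i^2 + \sum_{i=1}^n t_i^2 = \sum_{i=1}^n\xi_i^2 = 1$, so $(s,t)\in S$. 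Therefore every unit vector $\xi\in\R^n$ arises from some $(s,t)\in S$, giving $\Gamma_S(p)\leq\lambda_{min}(G)$ and hence equality. Since the minimum over the compact sphere is attained, the infimum in the definition of $\Gamma_S(p)$ is in fact a minimum, consistent with $S$ being compact.
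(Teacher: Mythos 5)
Your proof is correct. The paper does not actually prove this corollary --- it is imported from \cite{Robertson 2} --- but your argument (the substitution $\xi_{i}=s_{i}-t_{i}$, the identity $\sum_{i,j=1}^{n}g_{ij}\xi_{i}\xi_{j}=-\frac{1}{2}\sum_{i,j=0}^{n}d_{X}(x_{i},x_{j})^{p}\xi_{i}\xi_{j}$ valid whenever $\sum_{i=0}^{n}\xi_{i}=0$, and the positive/negative-part decomposition to get surjectivity onto the unit sphere) is precisely the correspondence the paper itself deploys later in the proof of the eigenspace theorem in Section \ref{Eigenspace Section}, so it can be regarded as essentially the intended proof. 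The only small point worth adding is a check that this $S$ really is an allowable set (in particular that it is bounded: disjoint support plus $\sum_{i=0}^{n}s_{i}=\sum_{i=0}^{n}t_{i}$ force, say, $t_{0}=0$ and $s_{0}=\sum_{i=1}^{n}(t_{i}-s_{i})\leq\sqrt{n}$ on $S$), although your equality argument never actually uses allowability.
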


	\section{Finite Ultrametric Spaces}\label{Ultrametric Section}
	
	Ultrametric spaces are an important family of metric spaces that have recently featured in a variety of different applications in the theory of metric embeddings. Let us recall their definition now.
	
	\begin{definition}
		A metric space $(X,d_{X})$ is said to be an ultrametric space if $d_{X}(x,y)\leq\max\{d_{X}(x,z),d_{X}(y,z)\}$ for all $x,y,z\in X$.
	\end{definition}

	\begin{remark}
		To avoid trivial cases, we shall always assume that $|X|\geq 3$.
	\end{remark}

	An important notion in the theory of finite ultrametric spaces is the following one.
	
	\begin{definition}
		Let $(X,d_{X})$, $|X|>1$, be a finite ultrametric space with minimum nonzero distance $\alpha$. Let $z\in X$ be given. The set $B_{z}(\alpha)=\{x\in X:d_{X}(x,z)\leq \alpha\}$ is said to be a coterie in $X$ if $|B_{z}(\alpha)|>1$.
	\end{definition}

	Some simple facts about coteries are the following. If $(X,d_{X})$ is a finite ultrametric space on at least two points then $X$ contains at least one coterie. Morever, if $B_{1}$ and $B_{2}$ are coteries then either $B_{1}=B_{2}$ or $B_{1}\cap B_{2}=\emptyset$.

	Ultrametric spaces are particularly important in the theory of $p$-negative type since they form the class of spaces which have infinite supremal $p$-negative type. That is, it was shown in \cite{Faver 1} that a metric space $(X,d_{X})$ has $\wp_{X}=\infty$ if and only if $X$ is an ultrametric space. As such, if $X$ is a finite ultrametric space then $\Gamma_{X}(p)>0$ for all $p>0$. Because of this, work has recently been done in the direction of calculating an explicit expression for $\Gamma_{X}(p)$ for all $p>0$ (see \cite{Wolf 2, Doust 2, Faver 1}). In particular, a formula for the limit of $\Gamma_{X}(p)$ as $p\rightarrow\infty$ was given for finite ultrametric spaces in \cite{Doust 2}.
	
	\section{The Minimum Eigenvalue Of The Gramian}\label{Minimum Eigenvalue Section}
	
	Throughout this section we let $(X,d_{X})=(\{x_{0},x_{1},\dots,x_{n}\},d_{X})$ be a finite ultrametric space with nonzero distances $\alpha_{1}<\dots<\alpha_{\ell}$. We also assume that $S$ is an allowable set and that $(s,t)\in S$.
	
	For $p>0$ let
		\[ \gamma(p)=2\sum_{i,j=0}^{n}s_{i}t_{j}d_{X}(x_{i},x_{j})^{p}-\sum_{i,j=0}^{n}(s_{i}s_{j}+t_{i}t_{j})d_{X}(x_{i},x_{j})^{p}.
		\]
	Then since the non-zero distances in $(X,d_{X})$ are $\alpha_{1},\dots,\alpha_{\ell}$ we have that $\gamma(p)=c_{1}\alpha_{1}^{p}+\dots+c_{\ell}\alpha_{\ell}^{p}$ for some $c_{1},\dots,c_{\ell}\in\R$.
	
	In \cite{Doust 2} an explicit expression was given for the constants $c_{i}$ in terms of $s_{i}$ and $t_{j}$ and the structure of the space $X$. For our purposes we require only the following properties of the constants $c_{i}$, which were proved in \cite{Doust 2}.
	
	\begin{theorem}\label{Constants Properties}
		The constants $c_{i}$ satisfy the following properties.
		\begin{enumerate}
			\item $c_{k}+c_{k+1}+\dots+c_{\ell}\geq 0$ for all $1\leq k\leq\ell$.
			\item $c_{1}+\dots+c_{\ell}=\sum_{i=0}^{n}s_{i}^{2}+\sum_{i=0}^{n}t_{i}^{2}$.
		\end{enumerate}
	\end{theorem}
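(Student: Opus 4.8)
The plan is to reduce both claims to the canonical filtration of a finite ultrametric space by metric balls, after which each statement becomes a telescoping identity. For $k=0,1,\dots,\ell$ let $\mathcal{P}_k$ denote the partition of $X$ whose blocks are the closed balls of radius $\alpha_k$, with the convention $\alpha_0:=0$; the ultrametric inequality makes ``$d_X(x,y)\le\alpha_k$'' an equivalence relation, so $\mathcal{P}_k$ is well defined, $\mathcal{P}_0$ is the partition into singletons, and $\mathcal{P}_\ell=\{X\}$ since $\alpha_\ell$ is the diameter. First I would record the pointwise expansion
\[
d_X(x_i,x_j)^p=\sum_{k=1}^{\ell}(\alpha_k^p-\alpha_{k-1}^p)\,\1\bigl[x_i,x_j\text{ lie in distinct blocks of }\mathcal{P}_{k-1}\bigr],
\]
valid for every $p>0$ (using $\alpha_0^p=0$): if $d_X(x_i,x_j)=\alpha_m$ then $x_i$ and $x_j$ lie in distinct blocks of $\mathcal{P}_{k-1}$ exactly when $k\le m$, so the right-hand side collapses to $\alpha_m^p$, while both sides vanish when $i=j$.

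Next I would substitute this into the definition of $\gamma(p)$ and interchange the finite sums. Writing $\Sigma=\sum_{i=0}^n s_i=\sum_{i=0}^n t_i$ and, for a block $A$, $S_A=\sum_{x_i\in A}s_i$ and $T_A=\sum_{x_i\in A}t_i$, the key point is the identity, for each fixed $k$,
\[
2\sum_{i,j=0}^{n}s_it_j\,\1[\,\cdot\,]-\sum_{i,j=0}^{n}(s_is_j+t_it_j)\,\1[\,\cdot\,]=\sum_{A\in\mathcal{P}_{k-1}}(S_A-T_A)^2=:Q_{k-1},
\]
where $\1[\,\cdot\,]$ abbreviates the indicator that $x_i,x_j$ lie in distinct blocks of $\mathcal{P}_{k-1}$. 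To see this, replace each indicator by $1-\1[\text{same block}]$: the three contributions coming from the constant $1$ combine to $(2-1-1)\Sigma^2=0$ (here one uses $\sum s_i=\sum t_i$), and the same-block contributions collect blockwise into $\sum_A(S_A^2-2S_AT_A+T_A^2)$. Consequently $\gamma(p)=\sum_{k=1}^{\ell}(\alpha_k^p-\alpha_{k-1}^p)Q_{k-1}$ with every $Q_{k-1}\ge 0$.

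Finally, regrouping by powers $\alpha_k^p$ and using that $Q_\ell=(S_X-T_X)^2=0$, the last expression reads $\gamma(p)=\sum_{k=1}^{\ell}(Q_{k-1}-Q_k)\alpha_k^p$; since the functions $p\mapsto\alpha_k^p$ are linearly independent, matching with $\gamma(p)=\sum_{k=1}^\ell c_k\alpha_k^p$ forces $c_k=Q_{k-1}-Q_k$ for all $1\le k\le\ell$. Hence for every $k$ the tail sum telescopes, $c_k+c_{k+1}+\dots+c_\ell=Q_{k-1}\ge 0$, which is (1); and taking $k=1$ gives $c_1+\dots+c_\ell=Q_0=\sum_{i=0}^n(s_i-t_i)^2=\sum_{i=0}^n s_i^2+\sum_{i=0}^n t_i^2$, the cross term vanishing because $s$ and $t$ are disjointly supported, which is (2).

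The only step with genuine content is the collapse, at each level $k$, of the alternating-sign combination appearing in $\gamma(p)$ into the manifestly nonnegative sum of squares $\sum_{A\in\mathcal{P}_{k-1}}(S_A-T_A)^2$; once that is in hand, (1) and (2) are simply the telescoping of $c_k=Q_{k-1}-Q_k$ at levels $k$ and $1$ respectively. Two minor points deserve a line each: that $\mathcal{P}_0$ being the singleton partition is precisely what turns $Q_0$ into $\sum_i(s_i-t_i)^2$, and that the distinctness of $\alpha_1<\dots<\alpha_\ell$ (hence linear independence of the $\alpha_k^p$) is what legitimises reading off the individual coefficients $c_k$ from the expansion of $\gamma(p)$.
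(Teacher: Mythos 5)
Your argument is correct. Note that the paper itself gives no proof of this theorem: it is imported verbatim from the reference [Doust--S\'anchez--Weston], so there is nothing internal to compare against, and your write-up has to stand on its own --- which it does. The three steps all check out: (i) in an ultrametric space the relation $d_X(x,y)\le\alpha_k$ is transitive, so the ball partitions $\mathcal{P}_0,\dots,\mathcal{P}_\ell$ are well defined and the layer-cake expansion of $d_X(x_i,x_j)^p$ is an exact identity for $p>0$ (where $\alpha_0^p=0$ is used); (ii) the collapse of the level-$k$ coefficient to $Q_{k-1}=\sum_{A}(S_A-T_A)^2$ is a direct computation using $\sum_i s_i=\sum_i t_i$ to kill the $2\Sigma^2-\Sigma^2-\Sigma^2$ term; and (iii) the Abel regrouping with $Q_\ell=0$ gives $c_k=Q_{k-1}-Q_k$, whence the tail sums telescope to $Q_{k-1}\ge0$ and $c_1+\dots+c_\ell=Q_0=\sum_i(s_i-t_i)^2=\sum_i s_i^2+\sum_i t_i^2$ by disjointness of supports. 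The appeal to linear independence of $p\mapsto\alpha_k^p$ to identify the $c_k$ is legitimate (and in fact matches how the paper implicitly defines them, by grouping the double sum according to the value of $d_X(x_i,x_j)$). This hierarchical decomposition is in the same spirit as the combinatorial formula for the $c_i$ developed in the cited source, but your ball-partition/telescoping formulation is a clean, self-contained route to exactly the two properties the present paper needs.
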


	\begin{remark}
		In \cite{Doust 2} it was the case that $2(c_{1}+\dots+c_{\ell})=\sum_{i=0}^{n}s_{i}^{2}+\sum_{i=0}^{n}t_{i}^{2}$. The reason for this difference is that our $\gamma(p)$ is actually a factor of $2$ times the definition of $\gamma(p)$ as given in \cite{Doust 2}.
	\end{remark}
	
	We may use these properties of the constants $c_{i}$ to compute $\lambda_{min}(G_{p})$.
	
	\begin{theorem}\label{Minimum Gamma}
		Let
			\[ S=\bigg\{(s,t)\in W_{n}:\sum_{i=1}^{n}s_{i}^{2}+\sum_{i=1}^{n}t_{i}^{2}=1\bigg\}.
			\]
		Then $\gamma(p)\geq\alpha_{1}^{p}$ for all $(s,t)\in S$ with equality if and only if $c_{2}=\dots=c_{\ell}=0$ and $s_{0}=t_{0}=0$.
	\end{theorem}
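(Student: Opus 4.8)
The plan is to rewrite $\gamma(p)$ by an Abel summation built around the partial sums $C_k := c_k + c_{k+1} + \dots + c_\ell$ that appear in Theorem~\ref{Constants Properties}, and then simply read off the inequality from the sign information those partial sums carry. Using $c_k = C_k - C_{k+1}$ with the convention $C_{\ell+1} = 0$, substituting into $\gamma(p) = \sum_{k=1}^{\ell} c_k \alpha_k^p$ and reindexing the telescoping piece, one obtains
\[ \gamma(p) = C_1\,\alpha_1^p + \sum_{k=2}^{\ell} C_k\,(\alpha_k^p - \alpha_{k-1}^p). \]

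Next I would bring in the two properties of the $c_i$'s. Property (1) of Theorem~\ref{Constants Properties} gives $C_k \geq 0$ for every $k$; since $\alpha_1 < \dots < \alpha_\ell$ and $p > 0$, each factor $\alpha_k^p - \alpha_{k-1}^p$ is strictly positive, so every term of the sum over $k \geq 2$ is nonnegative. Property (2), together with the normalisation $\sum_{i=1}^{n} s_i^2 + \sum_{i=1}^{n} t_i^2 = 1$ defining $S$, gives $C_1 = c_1 + \dots + c_\ell = 1 + s_0^2 + t_0^2 \geq 1$. Since $\alpha_1^p > 0$, it follows that $\gamma(p) \geq C_1\,\alpha_1^p \geq \alpha_1^p$, which is the asserted bound.

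For the equality characterisation I would trace back when both inequalities are tight. The bound $C_1\,\alpha_1^p \geq \alpha_1^p$ is an equality exactly when $s_0^2 + t_0^2 = 0$, i.e.\ $s_0 = t_0 = 0$; given this, $\gamma(p) = \alpha_1^p$ forces $\sum_{k=2}^{\ell} C_k(\alpha_k^p - \alpha_{k-1}^p) = 0$, and since each summand is nonnegative with $\alpha_k^p - \alpha_{k-1}^p > 0$ strictly, this forces $C_k = 0$ for all $2 \leq k \leq \ell$, hence $c_k = C_k - C_{k+1} = 0$ for $2 \leq k \leq \ell$. The converse is immediate: if $c_2 = \dots = c_\ell = 0$ and $s_0 = t_0 = 0$ then $\gamma(p) = c_1\alpha_1^p$ and $c_1 = C_1 = 1$, so $\gamma(p) = \alpha_1^p$.

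I do not expect a real obstacle here; the only point needing a little care is performing the Abel summation and reindexing correctly so that the boundary coefficient $C_1$ (and not $c_1$) multiplies $\alpha_1^p$, since it is exactly the excess $C_1 - 1 = s_0^2 + t_0^2$ that controls the $s_0 = t_0 = 0$ half of the equality condition. I would also verify that the conventions $C_{\ell+1} = 0$ and the empty-sum conventions behave correctly in the edge case $\ell = 1$.
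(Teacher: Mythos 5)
Your proposal is correct and is essentially the paper's own argument: the paper performs exactly your Abel summation, just unrolled as an iterated chain of inequalities (repeatedly replacing $c_{\ell}\alpha_{\ell}^{p}$ by $c_{\ell}\alpha_{\ell-1}^{p}$ and regrouping), arriving at $\gamma(p)\geq(c_{1}+\dots+c_{\ell})\alpha_{1}^{p}=(1+s_{0}^{2}+t_{0}^{2})\alpha_{1}^{p}\geq\alpha_{1}^{p}$ with the same equality analysis via the partial sums $C_{k}$. Your closed-form identity $\gamma(p)=C_{1}\alpha_{1}^{p}+\sum_{k=2}^{\ell}C_{k}(\alpha_{k}^{p}-\alpha_{k-1}^{p})$ is a slightly cleaner packaging of the same computation, and both the bound and the equality characterisation go through exactly as you describe.
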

	\begin{proof}
		Here we use the properties of the constants $c_{i}$ from Theorem \ref{Constants Properties} and the fact that $0<\alpha_{1}<\dots<\alpha_{\ell}$. So, since $c_{\ell}\geq0$ and $\alpha_{\ell-1}<\alpha_{\ell}$ we have that
			\begin{align*}
				\gamma(p)&=c_{1}\alpha_{1}^{p}+\dots+c_{\ell}\alpha_{\ell}^{p}
				\\&\geq c_{1}\alpha_{1}^{p}+\dots+c_{\ell-1}\alpha_{\ell-1}^{p}+c_{\ell}\alpha_{\ell-1}^{p}
				\\&=c_{1}\alpha_{1}^{p}+\dots+(c_{\ell-1}+c_{\ell})\alpha_{\ell-1}^{p}.
			\end{align*}
		Since $c_{\ell-1}+c_{\ell}\geq0$ and $\alpha_{\ell-2}<\alpha_{\ell-1}$
			\begin{align*}
				c_{1}\alpha_{1}^{p}+\dots+(c_{\ell-1}+c_{\ell})\alpha_{\ell-1}^{p}&\geq c_{1}\alpha_{1}^{p}+\dots+c_{\ell-2}\alpha_{\ell-2}^{p}+(c_{\ell-1}+c_{\ell})\alpha_{\ell-2}^{p}
				\\&=c_{1}\alpha_{1}^{p}+\dots+(c_{\ell-2}+c_{\ell-1}+c_{\ell})\alpha_{\ell-2}^{p}.
			\end{align*}
		Continuing in this manner one has that
			\[ \gamma(p)\geq(c_{1}+\dots+c_{\ell})\alpha_{1}^{p}=\bigg(\sum_{i=0}^{n}s_{i}^{2}+\sum_{i=0}^{n}t_{i}^{2}\bigg)\alpha_{1}^{p}=(1+s_{0}^{2}+t_{0}^{2})\alpha_{1}^{p}\geq\alpha_{1}^{p}.
			\]
		By the above string of inequalities, since $\alpha_{1}<\dots<\alpha_{\ell}$ it is clear that one has equality if and only if $c_{\ell}=c_{\ell-1}+c_{\ell}=\dots=c_{2}+\dots+c_{\ell}=0$ and $s_{0}=t_{0}=0$. But this happens if and only if $c_{2}=\dots=c_{\ell}=0$  and $s_{0}=t_{0}=0$ and so we are done.
	\end{proof}

	As long as one is careful in choosing $x_{0}$ we may now compute the minimum eigenvalue of the Gramian of a finite ultrametric space.
	
	\begin{definition}
		Let $(X,d_{X})=(\{x_{0},x_{1},\dots,x_{n}\},d_{X})$ be a finite ultrametric space. We will say that $X$ is degenerate if there is exactly one coterie $B\subseteq X$ and $|B|=2$ and $x_{0}\in B$. Otherwise we will say that $X$ is nondegenerate.
	\end{definition}

	\begin{remark}
		Note that since we are assuming that $|X|\geq 3$, the points in $X$ may always be reordered so that $X$ is nondegenerate.
	\end{remark}
	
	\begin{corollary}
		Let $(X,d_{X})=(\{x_{0},x_{1},\dots,x_{n}\},d_{X})$ be a nondegenerate finite ultrametric space. Then for any $p>0$ one has that $\lambda_{min}(G_{p})=\alpha_{1}^{p}/2$.
	\end{corollary}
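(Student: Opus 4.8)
The plan is to read off the lower bound from the machinery already assembled and then match it with a single explicit weight vector. Fix $p>0$ and take $S=\{(s,t)\in W_{n}:\sum_{i=1}^{n}s_{i}^{2}+\sum_{i=1}^{n}t_{i}^{2}=1\}$, exactly the allowable set appearing in Corollary \ref{Min Eigenvalue Formula} and Theorem \ref{Minimum Gamma}. By Corollary \ref{Min Eigenvalue Formula}, $\lambda_{min}(G_{p})=\Gamma_{S}(p)=\tfrac{1}{2}\inf_{(s,t)\in S}\gamma(p)$, where $\gamma(p)$ is as in Section \ref{Minimum Eigenvalue Section}. Theorem \ref{Minimum Gamma} then gives $\gamma(p)\geq\alpha_{1}^{p}$ for every $(s,t)\in S$, so immediately $\lambda_{min}(G_{p})\geq\alpha_{1}^{p}/2$, and it remains only to exhibit one $(s,t)\in S$ with $\gamma(p)=\alpha_{1}^{p}$.

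The first step towards that witness is a purely combinatorial one: using nondegeneracy I would produce two \emph{distinct} indices $i,j\in\{1,\dots,n\}$ (both different from $0$) with $d_{X}(x_{i},x_{j})=\alpha_{1}$. Recall that $X$ contains at least one coterie, that distinct coteries are disjoint, and that every pair of distinct points lying in a common coterie is at distance exactly $\alpha_{1}$ (by the ultrametric inequality together with minimality of $\alpha_{1}$), while conversely any pair at distance $\alpha_{1}$ lies in a common coterie. If no admissible pair $i,j\in\{1,\dots,n\}$ existed, then every pair at distance $\alpha_{1}$ would involve $x_{0}$; a quick case split (a coterie of size $\geq 3$ containing $x_{0}$, or a second coterie disjoint from the one containing $x_{0}$) each yields a pair avoiding $x_{0}$, a contradiction, leaving only the configuration of a single coterie of size $2$ containing $x_{0}$, i.e.\ the degenerate case. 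Hence such $i,j$ exist.

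Next I would set $s_{i}=t_{j}=1/\sqrt{2}$ with all remaining coordinates of $(s,t)\in\R^{2n+2}$ equal to $0$. Since $i\neq j$ the supports $\{i\}$ and $\{j\}$ are disjoint, $\sum_{k}s_{k}=1/\sqrt{2}=\sum_{k}t_{k}$, and $\sum_{k=1}^{n}s_{k}^{2}+\sum_{k=1}^{n}t_{k}^{2}=\tfrac12+\tfrac12=1$ (here is where $i,j\geq 1$ is used), so $(s,t)\in S$. Substituting into $\gamma(p)$, the only surviving term of $2\sum_{k,l}s_{k}t_{l}d_{X}(x_{k},x_{l})^{p}$ is $2s_{i}t_{j}d_{X}(x_{i},x_{j})^{p}=\alpha_{1}^{p}$, while $\sum_{k,l}(s_{k}s_{l}+t_{k}t_{l})d_{X}(x_{k},x_{l})^{p}$ reduces to $s_{i}^{2}d_{X}(x_{i},x_{i})^{p}+t_{j}^{2}d_{X}(x_{j},x_{j})^{p}=0$. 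Thus $\gamma(p)=\alpha_{1}^{p}$, consistent with the equality case of Theorem \ref{Minimum Gamma} ($s_{0}=t_{0}=0$ and $c_{2}=\dots=c_{\ell}=0$). Combining with the lower bound, $\inf_{(s,t)\in S}\gamma(p)=\alpha_{1}^{p}$ and therefore $\lambda_{min}(G_{p})=\alpha_{1}^{p}/2$.

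The only genuinely delicate point is the extraction of the two coterie points among $x_{1},\dots,x_{n}$, which is exactly the content of the nondegeneracy hypothesis; the rest is a direct substitution into formulas already in hand. It is also worth noting that this is why nondegeneracy can be assumed without loss of generality: when $|X|\geq 3$ one can always relabel so that the coterie of size $2$ (if it is the unique coterie) does not contain $x_{0}$.
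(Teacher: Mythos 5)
Your proposal is correct and follows essentially the same route as the paper: invoke Corollary \ref{Min Eigenvalue Formula} and Theorem \ref{Minimum Gamma} for the lower bound, then use nondegeneracy to find a coterie pair $x_{i},x_{j}$ with $1\leq i\neq j\leq n$ and take $s_{i}=t_{j}=1/\sqrt{2}$ as the equality witness. Your extra detail on why nondegeneracy yields such a pair and the explicit evaluation $\gamma(p)=\alpha_{1}^{p}$ simply fills in steps the paper leaves as ``straightforward to check.''
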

	\begin{proof}
		By Corollary \ref{Min Eigenvalue Formula}, if one sets
			\[ S=\bigg\{(s,t)\in W_{n}:\sum_{i=1}^{n}s_{i}^{2}+\sum_{i=1}^{n}t_{i}^{2}=1\bigg\}
			\]
		then $\lambda_{min}(G_{p})=\Gamma_{S}(p)$. Hence
			\[ \lambda_{min}(G_{p})=\Gamma_{S}(p)=\frac{1}{2}\inf_{(s,t)\in S}\gamma(p)\geq\alpha_{1}^{p}/2.
			\]
		Now, since $|X|\geq 3$ and $X$ is nondegenerate there exists a coterie $B\subseteq X$ and $x_{i},x_{j}\in B$ with $1\leq i\neq j\leq n$. Then let $s=(s_{0},s_{1},\dots,s_{n})$ where $s_{i}=1/\sqrt{2}$ and $s_{k}=0$ otherwise, and let $t=(t_{0},t_{1},\dots,t_{n})$ where $t_{j}=1/\sqrt{2}$ and $t_{k}=0$ otherwise. Then clearly $(s,t)\in S$. Also, for this choice of $(s,t)$ it is straightforward to check that $\gamma(p)=\alpha_{1}^{p}$. Hence $\lambda_{min}(G_{p})=\alpha_{1}^{p}/2$ as required.
	\end{proof}
	
	\section{The Corresponding Eigenspace}\label{Eigenspace Section}
	
	In this section we provide a description of the eigenspace of the Gramian matrix corresponding to the minimum eigenvalue $\lambda_{min}(G_{p})$. To do so we require the following characterisation of those $(s,t)\in S$ for which $c_{2}=\dots=c_{\ell}=0$ (see \cite{Doust 2}).
	
	Denote the distinct coteries of $X$ by $B_{1},\dots,B_{r}$ and let $X_{0}=X\setminus\bigcup_{i=1}^{r}B_{i}$. Also, for $Y\subseteq X$ we will let $\mathcal{I}(Y)=\{0\leq i\leq n:x_{i}\in Y\}$.
	
	Also, throughout this section we will assume that $X$ is nondegenerate.
	
	\begin{theorem}\label{Flat simplicies}
		Let $(s,t)\in S$. Then the following are equivalent.
		\begin{enumerate}
			\item $c_{2}=\dots=c_{\ell}=0$.
			\item $s_{i}=t_{i}=0$ for all $i\in\mathcal{I}(X_{0})$ and $\sum_{i\in\mathcal{I}(B_{j})}s_{i}=\sum_{i\in\mathcal{I}(B_{j})}t_{i}$ for all $1\leq j\leq r$.
		\end{enumerate}
	\end{theorem}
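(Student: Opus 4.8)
The plan is to compute $\gamma(p)$ directly from the hierarchical structure of the ultrametric space, rather than quoting the explicit formula for the $c_{k}$ from \cite{Doust 2}. First I would put $\xi=s-t\in\R^{n+1}$; since $(s,t)$ is a weight vector, $\sum_{i=0}^{n}\xi_{i}=0$, and since $s$ and $t$ are disjointly supported, $\xi_{i}=0$ if and only if $s_{i}=t_{i}=0$, while $\sum_{i\in\mathcal{I}(Y)}\xi_{i}=0$ if and only if $\sum_{i\in\mathcal{I}(Y)}s_{i}=\sum_{i\in\mathcal{I}(Y)}t_{i}$. A short expansion using the symmetry of $d_{X}$ shows $\gamma(p)=-\sum_{i,j=0}^{n}d_{X}(x_{i},x_{j})^{p}\xi_{i}\xi_{j}$, so $\gamma(p)$ depends on $(s,t)$ only through $\xi$.

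Next I would exploit the fact that, for an ultrametric space with nonzero distances $\alpha_{1}<\dots<\alpha_{\ell}$, the relation $x\sim_{m}y\iff d_{X}(x,y)\le\alpha_{m}$ is an equivalence relation for each $0\le m\le\ell$ (set $\alpha_{0}=0$), giving a refining chain of partitions $P_{0}\preceq P_{1}\preceq\dots\preceq P_{\ell}$, from singletons up to $\{X\}$. Using the layer-cake identity, valid for $p>0$,
\[ d_{X}(x_{i},x_{j})^{p}=\sum_{k=1}^{\ell}(\alpha_{k}^{p}-\alpha_{k-1}^{p})\,\1[d_{X}(x_{i},x_{j})\ge\alpha_{k}],
\]
the equivalence $d_{X}(x_{i},x_{j})\ge\alpha_{k}\iff x_{i},x_{j}$ lie in distinct blocks of $P_{k-1}$ (here discreteness of the distance set is what is used), and $\sum_{i}\xi_{i}=0$, I would obtain
\[ \gamma(p)=\sum_{k=1}^{\ell}(\alpha_{k}^{p}-\alpha_{k-1}^{p})A_{k-1},\qquad A_{m}:=\sum_{C\in P_{m}}\Big(\sum_{x_{i}\in C}\xi_{i}\Big)^{2}\ge0.
\]
Telescoping the coefficients of $\alpha_{k}^{p}$ then identifies the constants: $c_{k}+c_{k+1}+\dots+c_{\ell}=A_{k-1}$ for every $1\le k\le\ell$ (which in particular recovers Theorem \ref{Constants Properties}).

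From here the equivalence is almost immediate. Since the tail sums of the $c_{k}$ are exactly $A_{0},A_{1},\dots,A_{\ell-1}$, the condition $c_{2}=\dots=c_{\ell}=0$ is equivalent to $A_{1}=A_{2}=\dots=A_{\ell-1}=0$. But $P_{1}$ refines every $P_{m}$ with $m\ge1$, so $A_{1}=0$ — i.e. $\sum_{x_{i}\in C}\xi_{i}=0$ for every block $C$ of $P_{1}$ — already forces $A_{m}=0$ for all $m\ge1$; hence $c_{2}=\dots=c_{\ell}=0\iff A_{1}=0$. Finally I would identify the blocks of $P_{1}$: the blocks of size $>1$ are precisely the coteries $B_{1},\dots,B_{r}$, while the union of the singleton blocks is exactly $X_{0}=X\setminus\bigcup_{j=1}^{r}B_{j}$. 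Thus $A_{1}=0$ says $\xi_{i}=0$ for $i\in\mathcal{I}(X_{0})$ and $\sum_{i\in\mathcal{I}(B_{j})}\xi_{i}=0$ for each $j$, which, translating $\xi=s-t$ as in the first paragraph, is exactly statement (2).

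The only delicate parts are bookkeeping: justifying the layer-cake identity and the equivalence "$d_{X}(x_{i},x_{j})\ge\alpha_{k}$ iff $x_{i},x_{j}$ lie in distinct $P_{k-1}$-blocks" from the discreteness of the distance set, respecting the convention $\alpha_{0}^{p}=0$ for $p>0$, and noting the vacuous case $\ell=1$ (where both conditions hold automatically, the second reducing to the weight-vector identity $\sum_{i}s_{i}=\sum_{i}t_{i}$). Nondegeneracy of $X$ is not needed for this particular argument beyond being a standing hypothesis of the section.
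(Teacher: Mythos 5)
Your argument is correct, but it is worth noting that the paper itself offers no proof of this statement at all: Theorem \ref{Flat simplicies} is simply quoted from \cite{Doust 2}, so there is nothing in the text to compare against line by line. What you have done is supply a self-contained derivation of the structural facts that \cite{Doust 2} is being cited for. The key identity you establish, namely
\[ \gamma(p)=\sum_{k=1}^{\ell}(\alpha_{k}^{p}-\alpha_{k-1}^{p})A_{k-1},\qquad A_{m}=\sum_{C\in P_{m}}\Big(\sum_{x_{i}\in C}\xi_{i}\Big)^{2},
\]
together with the resulting formula $c_{k}+\dots+c_{\ell}=A_{k-1}$ (the coefficients are pinned down uniquely because the functions $p\mapsto\alpha_{k}^{p}$ for distinct $\alpha_{k}>0$ are linearly independent), is precisely the kind of partition-hierarchy expression for the $c_{k}$ that \cite{Doust 2} provides, so your route is in the same spirit as the source, but it has the advantage of making the present paper independent of that reference: it re-proves Theorem \ref{Constants Properties} as a byproduct, and the equivalence of the theorem falls out of the observation that $A_{1}=0$ forces $A_{m}=0$ for all $m\geq 1$ since $P_{1}$ refines each coarser partition. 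All the individual steps check out: the reduction to $\xi=s-t$ uses only nonnegativity and disjoint supports; the layer-cake expansion and the identification of $\{d_{X}(x_{i},x_{j})\geq\alpha_{k}\}$ with ``distinct $P_{k-1}$-blocks'' use only that the distance set is $\{0,\alpha_{1},\dots,\alpha_{\ell}\}$ and the strong triangle inequality; and the blocks of $P_{1}$ of size greater than one are exactly the coteries, with the singletons exhausting $X_{0}$. You are also right that nondegeneracy plays no role here. The one thing I would make explicit in a final write-up is the uniqueness of the $c_{k}$, since the theorem is a statement about those particular constants and your telescoping only identifies \emph{a} set of coefficients until linear independence is invoked.
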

	
	Let
		\[ \Pi_{0}=\{\xi=(\xi_{0},\xi_{1},\dots,\xi_{n})^{T}\in\R^{n+1}:\xi_{0}+\xi_{1}+\dots+\xi_{n}=0\}
		\]
	and then put
		\[ F=\bigg\{\xi\in\Pi_{0}:\xi_{i}=0,\forall i\in\mathcal{I}(X_{0})\cup\{0\}\text{ and }\sum_{i\in \mathcal{I}(B_{j})}\xi_{i}=0,\forall 1\leq j\leq r\bigg\}.
		\]
	Also, define $\pi:\Pi_{0}\rightarrow\R^{n}$ by $\pi(\xi_{0},\xi_{1},\dots,\xi_{n})=(\xi_{1},\dots,\xi_{n})$.
	
	\begin{theorem}
		Let $E$ be the eigenspace of $G_{p}$ corresponding to the eigenvalue $\lambda_{min}(G_{p})=\alpha_{1}^{p}/2$. Then $E=\pi(F)$.
	\end{theorem}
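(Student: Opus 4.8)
The plan is to relate the eigenspace $E$ to the infimum formula for $\lambda_{\min}(G_p)$ via Corollary~\ref{Min Eigenvalue Formula}, and then invoke Theorem~\ref{Minimum Gamma} and Theorem~\ref{Flat simplicies}. Recall that $\lambda_{\min}(G_p) = \Gamma_S(p) = \tfrac12 \inf_{(s,t)\in S} \gamma(p)$ with $S$ the unit-sphere allowable set. The key observation is that the expression $\gamma(p)$, for a weight vector $(s,t)$, equals $-2\sum_{i,j=1}^n g_{ij}(s_i - t_i)(s_j - t_j)$ up to the appropriate normalisation: writing $\xi = (s_1-t_1,\dots,s_n-t_n)$, one has $\gamma(p) = -2\,\xi^T G_p \xi + (\text{boundary term involving } s_0, t_0)$. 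Working this out carefully (using $\sum s_i = \sum t_i$ and disjoint support), I would show that for $(s,t) \in S$ with $s_0 = t_0 = 0$, we get $\gamma(p) = -2\,\xi^T G_p\, \xi$ where additionally $\|\xi\|^2 \le \sum_{i=1}^n(s_i^2+t_i^2) = 1$ by disjointness, and $\sum_{i=1}^n \xi_i = 0$, i.e. $(0,\xi^T)^T \in \Pi_0$. So minimising $\gamma(p)$ over $S$ is, after this substitution, essentially a Rayleigh-quotient minimisation of $-G_p$ subject to the constraint that the witnessing vectors lie in $\pi(\Pi_0 \cap \{\xi_0 = 0\})$.

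First I would establish the inclusion $\pi(F) \subseteq E$. Take $\eta \in F$; I want to produce $(s,t) \in S$ achieving equality $\gamma(p) = \alpha_1^p$ and with $s - t$ proportional to $\eta$. Decompose $\eta$ into its positive and negative parts: set $s_i = c\,\eta_i^+$, $t_i = c\,\eta_i^-$ for $i \ge 1$ and $s_0 = t_0 = 0$, with $c > 0$ chosen so that $\sum(s_i^2+t_i^2) = 1$. Because $\eta \in F \subseteq \Pi_0$ with $\eta_0 = 0$ we have $\sum_{i\ge1} s_i = \sum_{i\ge1} t_i$, so $(s,t) \in W_n$, and the support condition $\eta_i = 0$ on $\mathcal{I}(X_0)\cup\{0\}$ together with $\sum_{i\in\mathcal{I}(B_j)}\eta_i = 0$ gives exactly condition (2) of Theorem~\ref{Flat simplicies} for $(s,t)$ — hence $c_2 = \dots = c_\ell = 0$ and, since $s_0 = t_0 = 0$, Theorem~\ref{Minimum Gamma} yields $\gamma(p) = \alpha_1^p$, i.e. $(s,t)$ is a minimiser. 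A minimiser of the Rayleigh-type quotient is an eigenvector for the minimum eigenvalue, so $\pi(\eta) = c\,\eta \in E$, whence $\eta \in E$ (scaling), giving $\pi(F) \subseteq E$. One subtlety to handle: the positive/negative-part decomposition has disjoint supports automatically, but I must check $(s,t) \ne 0$, which holds since $\eta \ne 0$ unless $\eta = 0$, and $0 \in \pi(F) \cap E$ trivially.

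For the reverse inclusion $E \subseteq \pi(F)$, I would argue that any $\zeta \in E$, $\zeta \ne 0$, must arise as $s - t$ for a minimising $(s,t) \in S$: given an eigenvector $\zeta = (\zeta_1,\dots,\zeta_n)^T$ for $\lambda_{\min}$, lift it to $\hat\zeta = (-\sum\zeta_i, \zeta_1,\dots,\zeta_n)^T$... but wait, this need not lie in $\Pi_0$ with $\xi_0 = 0$. The cleaner route: the standard fact (from \cite{Robertson 2}, or re-derivable) that $\xi^T G_p \xi \ge (\alpha_1^p/2)\|\xi\|^2$ for all $\xi \in \R^n$, with the lift $(\xi_0,\xi) \in \Pi_0$; equality in the eigenvalue problem forces the corresponding weight vector to satisfy $s_0 = t_0 = 0$ and $c_2 = \dots = c_\ell = 0$, which by Theorem~\ref{Flat simplicies} puts $s,t$ — and hence $\xi = s-t$ — into the flat subspace, giving $(\xi_0, \xi) = (0,\xi) \in F$, so $\zeta = \pi((0,\xi)) \in \pi(F)$. \textbf{The main obstacle} I anticipate is the bookkeeping connecting a Gramian eigenvector $\zeta \in \R^n$ to a \emph{weight vector} $(s,t) \in W_n$: an eigenvector's coordinates have arbitrary signs and need not decompose as a difference of two vectors whose squared norms sum to a controlled quantity while remaining disjointly supported and balanced. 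I expect this is resolved exactly by the equality-case rigidity of Theorem~\ref{Minimum Gamma} (forcing $s_0 = t_0 = 0$, which makes the boundary term vanish and the correspondence $\xi \leftrightarrow (s,t)$ tight) combined with Theorem~\ref{Flat simplicies}, so the proof reduces to assembling these two rigidity statements carefully rather than to any new inequality.
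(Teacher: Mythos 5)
Your proposal is correct and follows essentially the same route as the paper: lift an eigenvector $\eta$ to $\xi=\pi^{-1}(\eta)\in\Pi_{0}$, use the identity $\langle G_{p}\eta,\eta\rangle=-\tfrac{1}{2}\sum_{i,j=0}^{n}d_{X}(x_{i},x_{j})^{p}\xi_{i}\xi_{j}$ to convert the Rayleigh-quotient equality into $\gamma(p)=\alpha_{1}^{p}$ for the weight vector built from the positive and negative parts of $\xi$, and then apply the equality-case rigidity of Theorem~\ref{Minimum Gamma} (which forces $s_{0}=t_{0}=0$, resolving the worry about $\xi_{0}$) together with Theorem~\ref{Flat simplicies}, reversing the argument for the other inclusion. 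The one slip is minor: by disjointness of supports one gets $\|\xi\|^{2}=\sum_{i}s_{i}^{2}+\sum_{i}t_{i}^{2}$ exactly, not merely an inequality.
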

	\begin{proof}
		Let $\lambda=\lambda_{min}(G_{p})=\alpha_{1}^{p}/2$. First suppose that $\eta\in E$. If $\eta=0$ then trivially $\eta\in\pi(F)$. Now suppose that $\eta\neq 0$. Since $\eta\in E$ one has that $\langle G_{p}\eta,\eta\rangle=\lambda\|\eta\|_{2}^{2}$. If one puts $\xi=\pi^{-1}(\eta)$ then simple manipulation of sums gives that $\langle G_{p}\eta,\eta\rangle=-\frac{1}{2}\sum_{i,j=0}^{n}d_{X}(x_{i},x_{j})^{p}\xi_{i}\xi_{j}$, and hence we have that
			\[ \sum_{i,j=0}^{n}d_{X}(x_{i},x_{j})^{p}\xi_{i}\xi_{j}+2\lambda\|\eta\|_{2}^{2}=0.
			\]
		Now define $s=(s_{0},s_{1},\dots,s_{n})^{T}\in\R^{n+1}$ by $s_{i}=\xi_{i}/\|\eta\|_{2}$ if $\xi_{i}\geq 0$ and $s_{i}=0$ otherwise. Similarly, define $t=(t_{0},t_{1},\dots,t_{n})\in\R^{n+1}$ by $t_{i}=-\xi_{i}/\|\eta\|_{2}$ if $\xi_{i}\leq 0$ and $t_{i}=0$ otherwise. Note in particular that $(s,t)\in W_{n}$ with $\sum_{i=1}^{n}s_{i}^{2}+\sum_{i=1}^{n}t_{i}^{2}=\|\eta\|_{2}^{2}/\|\eta\|_{2}^{2}=1$. Now, the above equality can be rearranged to read as
			\[ \gamma(p)=2\sum_{i,j=0}^{n}s_{i}t_{j}d_{X}(x_{i},x_{j})^{p}-\sum_{i,j=0}^{n}(s_{i}s_{j}+t_{i}t_{j})d_{X}(x_{i},x_{j})^{p}=\alpha_{1}^{p}.
			\]
		Hence, by Theorem \ref{Minimum Gamma} one has that $c_{2}=\dots=c_{\ell}=0$ and $s_{0}=t_{0}=0$ for the weight vector $(s,t)$. By Theorem \ref{Flat simplicies} this means that $s_{i}=t_{i}=0$ for all $i\in\mathcal{I}(X_{0})$ and $\sum_{i\in\mathcal{I}(B_{j})}s_{i}=\sum_{i\in\mathcal{I}(B_{j})}t_{i}$ for all $1\leq j\leq r$. By the definition of $(s,t)$ this is clearly equivalent to saying that $\xi\in F$. Hence $\eta=\pi(\xi)\in\pi(F)$.
		
		Conversely, suppose that $\eta\in\pi(F)$. Again, if $\eta=0$ then $\eta\in E$ trivially. Now suppose that $\eta\neq 0$. For this direction we simply reverse the above argument. So, $\xi=\pi^{-1}(\eta)\in F$. Thus if we define $s,t\in\R^{n+1}$ as above then the definition of $F$ implies that the weight vector $(s,t)$ has $\sum_{i=1}^{n}s_{i}^{2}+\sum_{i=1}^{n}t_{i}^{2}=\|\eta\|_{2}^{2}/\|\eta\|_{2}^{2}=1$ and $c_{2}=\dots=c_{\ell}=0$ and $s_{0}=t_{0}=0$. Hence one has that
			\[ \gamma(p)=2\sum_{i,j=0}^{n}s_{i}t_{j}d_{X}(x_{i},x_{j})^{p}-\sum_{i,j=0}^{n}(s_{i}s_{j}+t_{i}t_{j})d_{X}(x_{i},x_{j})^{p}=\alpha_{1}^{p}
			\]
		which can be rearranged to give
			\[ \langle G_{p}\eta,\eta\rangle=\lambda\|\eta\|_{2}^{2}.
			\]
		Since $\lambda$ is the minimum eigenvalue of the symmetric matrix $G_{p}$ this is enough to conclude that $\eta$ is an eigenvector with eigenvalue $\lambda$. That is, $\eta\in E$ and so we are done.
	\end{proof}
	
	\begin{corollary}\label{Dimension}
		If $x_{0}\in X_{0}$ then the dimension of the eigenspace $E$ is $\sum_{i=1}^{r}|B_{i}|-r$. Otherwise, if $x_{0}\in\bigcup_{i=1}^{r}B_{i}$ then the dimension of the eigenspace $E$ is $\sum_{i=1}^{r}|B_{i}|-r-1$.
	\end{corollary}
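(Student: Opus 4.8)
The plan is to transfer the computation from $E$ to $F$ and then count linear constraints. Since $\pi$ is a linear isomorphism from $\Pi_{0}$ onto $\R^{n}$ (its inverse $\pi^{-1}$, given by $\pi^{-1}(\eta_{1},\dots,\eta_{n})=(-\sum_{i}\eta_{i},\eta_{1},\dots,\eta_{n})$, was already used in the previous proof) and $F\subseteq\Pi_{0}$ is a linear subspace, the preceding theorem gives $E=\pi(F)$ and hence $\dim E=\dim F$. So it suffices to compute $\dim F$.

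To do this I would first record the structural fact that the distinct coteries $B_{1},\dots,B_{r}$ are pairwise disjoint and $X_{0}=X\setminus\bigcup_{i=1}^{r}B_{i}$, so that the index set $\{0,1,\dots,n\}$ is partitioned as $\mathcal{I}(X_{0})\sqcup\mathcal{I}(B_{1})\sqcup\dots\sqcup\mathcal{I}(B_{r})$. Next I would observe that the defining equation $\sum_{i=0}^{n}\xi_{i}=0$ of $\Pi_{0}$ is redundant given the other defining conditions of $F$: once $\xi_{i}=0$ for all $i\in\mathcal{I}(X_{0})$, the full sum $\sum_{i=0}^{n}\xi_{i}$ equals $\sum_{j=1}^{r}\sum_{i\in\mathcal{I}(B_{j})}\xi_{i}$, which vanishes term by term. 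Therefore $F$ is exactly the subspace of $\R^{n+1}$ cut out by "$\xi_{i}=0$ for all $i\in\mathcal{I}(X_{0})\cup\{0\}$" together with "$\sum_{i\in\mathcal{I}(B_{j})}\xi_{i}=0$ for $1\leq j\leq r$", and this subspace decomposes as a direct sum $F\cong\bigoplus_{j=1}^{r}V_{j}$, where $V_{j}$ is the space of tuples supported on the coordinates $\mathcal{I}(B_{j})$ whose entries sum to zero, subject in addition to $\xi_{0}=0$ in the (at most one) block with $0\in\mathcal{I}(B_{j})$.

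I would then split into the two cases. If $x_{0}\in X_{0}$, then $0\in\mathcal{I}(X_{0})$, so the condition $\xi_{0}=0$ is already absorbed into the $X_{0}$-conditions and no block carries an extra constraint; hence $\dim V_{j}=|B_{j}|-1$ for each $j$, giving $\dim E=\dim F=\sum_{j=1}^{r}(|B_{j}|-1)=\sum_{i=1}^{r}|B_{i}|-r$. If instead $x_{0}\in\bigcup_{i=1}^{r}B_{i}$, say $x_{0}\in B_{1}$ so that $0\in\mathcal{I}(B_{1})$, then $V_{1}$ consists of tuples on $\mathcal{I}(B_{1})$ that sum to zero and have their $0$-coordinate equal to zero; eliminating that coordinate identifies $V_{1}$ with the zero-sum subspace of $\R^{|B_{1}|-1}$, of dimension $|B_{1}|-2$, while $\dim V_{j}=|B_{j}|-1$ for $j\geq2$. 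Summing, $\dim E=(|B_{1}|-2)+\sum_{j=2}^{r}(|B_{j}|-1)=\sum_{i=1}^{r}|B_{i}|-r-1$, as claimed.

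There is no substantive obstacle here beyond careful bookkeeping. The two points that must be handled with a little care are the redundancy of the $\Pi_{0}$-equation relative to the per-block equations, and, in the second case, the fact that imposing $\xi_{0}=0$ does not add an independent constraint but rather tightens the single block containing $x_{0}$; the standing nondegeneracy hypothesis guarantees that the resulting counts are nonnegative (each $|B_{i}|\geq 2$, and the excluded configuration $r=1$, $|B_{1}|=2$, $x_{0}\in B_{1}$ is precisely degeneracy).
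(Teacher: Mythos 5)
Your argument is correct and follows essentially the same route as the paper: reduce to computing $\dim F$ via the linear isomorphism $\pi$, then count using the partition of indices into $\mathcal{I}(X_{0})$ and the disjoint blocks $\mathcal{I}(B_{j})$. The only cosmetic difference is that the paper exhibits an explicit basis $\{e_{i}-e_{k_{j}}\}$ for $F$ while you count constraints via a direct-sum decomposition; the bookkeeping and the case split on the location of $x_{0}$ are identical.
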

	\begin{proof}
		It is a simple matter to write down a basis for $F$. To this end, let $e_{0},e_{1},\dots,e_{n}$ be the standard basis for $\R^{n+1}$. Also, for each $1\leq j\leq r$ let $k_{j}\in\mathcal{I}(B_{j})$ with $k_{j}\neq 0$. Then it is not hard to see that the set
			\[ \{e_{i}-e_{k_{j}}:i\in\mathcal{I}(B_{j}),i\neq k_{j},i\neq 0,1\leq j\leq r\}
			\]
		is a basis for $F$. Hence the dimension of $F$ is given by $\sum_{i=1}^{r}|B_{i}|-r$ if $x_{0}\in X_{0}$, and $\sum_{i=1}^{r}|B_{i}|-r-1$ otherwise. Then, noting that $\pi:\Pi_{0}\rightarrow\R^{n}$ is a linear isomorphism it follows that $E$ has the same dimension as $F$.
	\end{proof}
	
	\section{An Example}\label{Example Section}
	
	Let $(X,d_{X})=(\{x_{0},x_{1},x_{2},x_{3},x_{4},x_{5},x_{6}\},d_{X})$ be the $7$ point metric space with distance matrix $D=(d_{X}(x_{i},x_{j}))_{i,j=0}^{6}$ given by
		\[ \begin{pmatrix} 0 & 3 & 3 & 4 & 4 & 4 & 4 \\ 3 & 0 & 1 & 4 & 4 & 4 & 4 \\ 3 & 1 & 0 & 4 & 4 & 4 & 4 \\ 4 & 4 & 4 & 0 & 1 & 2 & 2 \\ 4 & 4 & 4 & 1 & 0 & 2 & 2 \\ 4 & 4 & 4 & 2 & 2 & 0 & 1 \\ 4 & 4 & 4 & 2 & 2 & 1 & 0 \end{pmatrix}.
		\]
	It is tedious, yet not difficult, to check that $X$ is in fact an ultrametric space and that the coteries of $X$ are given by $B_{1}=\{x_{1},x_{2}\}$, $B_{2}=\{x_{3},x_{4}\}$ and $B_{3}=\{x_{5},x_{6}\}$. Thus $x_{0}\in X_{0}$ and in particular this implies that $x_{0}$ is nondegenerate.
	
	Note that the minimum nonzero distance in $X$ is $\alpha_{1}=1$ and hence the minimum eigenvalue of the Gramian $G_{p}$ is given by $\lambda_{min}(G_{p})=1/2$, for all $p\geq 0$. Keeping the notation from the proof of Corollary \ref{Dimension} we may choose $k_{1}=1, k_{2}=3, k_{3}=5$. Then a basis for $E$, the eigenspace of $G_{p}$ corresponding the the eigenvalue $1/2$, is given by
		\[ \{e_{2}-e_{1},e_{4}-e_{3},e_{6}-e_{5}\}=\{(-1,1,0,0,0,0)^{T},(0,0,-1,1,0,0)^{T},(0,0,0,0,-1,1)^{T}\}.
		\]

	\section*{Acknowledgments}
	
	
	\bibliographystyle{amsalpha}

\end{document}